\newtheorem{theorem}{Theorem}
\newtheorem{definition}[theorem]{Definition}
\newtheorem{remark}[theorem]{Remark}
\newenvironment{proof}[1][Proof]{\noindent\textbf{#1. }}{\hfill \rule{0.5em}{0.5em} \medskip\newline }
\newcommand{\bF}{\overline{F}}
\newcommand{\IFR}{{\rm c}}
\newcommand{\DFR}{{\rm DFR}}
\newcommand{\sIFR}[1]{\ensuremath{#1\!-\!}{\rm IFR}}
\newcommand{\IFRA}{\ensuremath{\ast}}
\newcommand{\DFRA}{{\rm DFRA}}
\newcommand{\sIFRA}[1]{\ensuremath{#1\!-\!}{\rm IFRA}}
\title{
On a conjecture about the comparability of parallel systems with respect to convex transform order\thanks{This work was partially supported by the Centre for Mathematics of the University of Coimbra -- UID/MAT/00324/2013, funded by the Portuguese Government through FCT/MEC and co-funded by the European Regional Development Fund through the Partnership Agreement PT2020.}}
\author[1]{Idir Arab}
\author[2]{Milto Hadjikyriakou}
\author[1]{Paulo Eduardo Oliveira}
\affil[1]{CMUC, Department of Mathematics, University of Coimbra, Portugal}
\affil[2]{University of Central Lancashire, Cyprus}
\date{}
\begin{document}

\maketitle

\begin{abstract}
In this paper we prove that two heterogeneous parallel systems with independent exponentially distributed components are comparable via the star transform order while the comparison via the convex transform fails. The latter conclusion provides a partial answer to a problem that remained open for a decade.

\smallskip

\noindent\textit{Keywords}: convex transform order; failure rate; sign variation

\noindent\textit{2010 Mathematics Subject Classification}: Primary 60E15; Secondary 60E05, 62N05
\end{abstract}

\section{Introduction}
Deciding about the ageing properties of systems whose lifetime is random requires an appropriate meaning of the comparison criterium. The literature is abundant in alternative definitions of ageing properties and the corresponding ordering of the lifetime distributions. Generally speaking, the approach starts by the definition of a way to measure a relevant risk. The most common risk notions are the reliability function, the conditional survival function, the failure rate or the expected value of residual life. There are, of course, several other notions that may be used to compare the behaviour of lifetime distributions. We will be interested in failure rate risk properties. The characterization of monotonicity of the failure rate function for a lifetime distribution is an important aspect and this has been studied, among others, by Barlow and Proschan~\cite{BP-81}, Patel~\cite{Pt-83}, Sengupta~\cite{Sp-94} and El-Bassiouny~\cite{ElB-03}.

Once we defined a risk to be measured, it is important to determine how lifetime distributions are ordered with respect to the given risk. This may be viewed as deciding which distribution is ageing faster with respect to the defined risk. These order relations usually define partial orderings, a subject that has been studied extensively by various authors (see for example, Desphande et al.~\cite{Des86}, Kochar and Wiens~\cite{KW-87}, Singh~\cite{SH-89}, Fagiuoli and Pellerey~\cite{FP93} or Shaked and Shanthikumar~\cite{SS07}). In most cases the order relations defined are included in some family of transform order relationships. We refer the reader to section 4.B in Shaked and Shanthikumar~\cite{SS07} for general definitions and properties of such notions. It is interesting to mention that one of the transform orders we will be studying later, the convex transform order, has a geometric interpretation, as described by van Zwet~\cite{VZ-70}, providing a way to compare the skeweness of lifetime distributions.

This note responds to the ageing order between parallel systems that has been conjectured in Kochar and Xu~\cite{KX09} who were interested in comparing the ageing performance of parallel systems with respect to the convex transform order mentioned above. In particular, they were interested in comparing the ageing properties of parallel systems whose components are independent and have exponential lifetime distributions. In their Theorem~3.1, they proved that, for systems with the same number of components, a parallel system with homogeneous components ages faster, with respect to the convex transform order, compared to a parallel system with heterogeneous components. In Remark~3.2 in Kochar and Xu~\cite{KX09}, it is conjectured that the same ageing behaviour holds when comparing two heterogeneous systems based on components that have exponentially distributed lifetimes
with hazard rates that, besides having the same sum, can be ordered in a suitable way (see Definition~\ref{def:ord} below for details).
The authors justify this conjecture based on the intuitive extension of their Theorem~3.1 and on the empirical evidence they collected. Of course, being a conjecture, Kochar and Xu were not able to give a mathematical proof for this. Our Theorem~\ref{thm:conj} below, proves that the conjecture is not valid. Furthermore, the proof provides an explanation for the empirical evidence Kochar and Xu collected, as it is shown that the choice of parameters that violates the convex transform ordering happens in a rather narrow region, which is easily missed if no prior indication is available.

\section{Preliminaries}
Let $X$ be a nonnegative random variable with density function $f_X$, distribution function $F_X$, and tail function $\overline{F}_X=1-F_X$. Moreover, for each $x\geq 0$ the failure rate function of $X$ is given by $r_{X}(x)=\frac{f_X(x)}{1-F_X(x)}$. Two of the most simple and common ageing notions are defined in terms of the failure rate function. Their definitions are given below. 
\begin{definition}
Let $X$ be a nonnegative valued random variable.
\begin{enumerate}
\item
$X$ is said {\rm IFR} (resp. \DFR) if $r_{X}$ is increasing (resp. decreasing) for $x\geq 0$.
\item
$X$ is said {\rm IFRA} (resp. $\DFRA$) if $\frac{1}{x}\int_0^x r_{X,s}(t)\,dt$ is increasing (resp. decreasing) for $x>0$.
\end{enumerate}
\label{def:s-fail}
\end{definition}
The above definitions refer to monotonicity properties of the distribution. In the following, we introduce criteria that will allow us to compare distribution functions.
\begin{definition}
\label{DEF S-IFR}
Let $\mathcal{F}$ denote the family of distributions functions such that $F(0)=0$, $X$ and $Y$ be nonnegative random variables with distribution functions $F_X,F_Y\in\mathcal{F}$.
    \begin{enumerate}
    \item
    The random variable $X$ (or its distribution $F_X$) is said smaller than $Y$ (or its distribution $F_Y$) in convex transform order, and we write $X\leq_{\IFR}Y$, or equivalently, $F_X\leq_{\IFR}F_Y$, if $\overline{F}_Y^{-1}(\overline{F}_X(x))$ is convex.
    \item
    The random variable $X$ (or its distribution $F_X$) is said smaller than $Y$ (or its distribution $F_Y$) in star transform order, and we write $X\leq_{\IFRA}Y$, or equivalently, $F_X\leq_{\IFRA}F_Y$, if $\frac{\overline{F}_Y^{-1}(\overline{F}_X(x))}{x}$ is increasing (this is also known as $\overline{F}_Y^{-1}(\overline{F}_X(x))$ being star-shaped).
    \end{enumerate}
\end{definition}
The convex transform order has been introduced by van Zwet~\cite{VZ-70} to characterize and order the skewness of densities of lifetime distributions, giving a definition for $X$ being less skewed than $Y$. The definitions above fall in the family of iterated {\rm IFR} and {\rm IFRA} orders, respectively, introduced and initially studied in Nanda et al.~\cite{ASOK}, Arab and Oliveira~\cite{AO18} or Arab, Hadjikyriakou and Oliveira~\cite{AMO18}, considering their iteration parameter to be 1. Indeed, it is immediate to verify that the convex order relation ``$\leq_\IFR$'' is denoted in the above mentioned references by ``$\leq_{\sIFR{1}}$'', while the star transform order ``$\leq_\IFRA$'' is denoted by ``$\leq_{\sIFRA{1}}$''. It is particularly useful to highlight at this point that Nanda et al.~\cite{ASOK} proved that the iterated {\rm IFR} and {\rm IFRA} orderings, that is the convex transform and the star transform orders, define partial order relations in the equivalence classes of $\mathcal{F}$ corresponding to the equivalence relation $F\sim G$ defined by $F(x)=G(kx)$, for some $k>0$. In case of families of distributions that have a scale parameter, this allows to choose the parameter in the most convenient way. 

A general characterization of the above transform order relations is given below (see Propositions~3.1 and 4.1 in Nanda et al.~\cite{ASOK}).
\begin{theorem}
\label{convexity-equivalence}
Let $X$ and $Y$ be random variables with distribution functions $F_X,F_Y\in\mathcal{F}$.
    \begin{enumerate}
    \item
    $X\leq_{\IFRA}Y$ if and only if for any real number $a$, $\bF_{Y}(x)-\bF_{X}(ax)$ changes sign at most once, and if the change of signs occurs, it is in the order ``$-,+$'', as $x$ traverses from $0$ to $+\infty$.
    \item
    $X\leq_{\IFR}Y$ if and only if for any real numbers $a$ and $b$, $\bF_{Y}(x)-\bF_{X}(ax+b)$ changes sign at most twice, and if the change of signs occurs twice, it is in the order ``$+,-,+$'', as $x$ traverses from $0$ to $+\infty$.
    \end{enumerate}

\begin{remark}
\label{AOalpha}
As mentioned in Remark 25 in Arab and Oliveira~\cite{AO18}, it is enough to verify the above characterizations for $a>0$.
\end{remark}
\end{theorem}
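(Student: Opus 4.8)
The plan is to reduce both equivalences to classical geometric characterizations of star-shapedness and convexity, applied to the transform function $\phi(x)=\bF_Y^{-1}(\bF_X(x))$, and then to translate the resulting sign conditions on $\phi$ into the stated sign conditions on differences of tail functions. First I would record the basic properties of $\phi$: since $F_X,F_Y\in\mathcal{F}$ we have $\bF_X(0)=1$ and $\bF_Y^{-1}(1)=0$, so $\phi(0)=0$, and $\phi$ is nondecreasing, being the composition of the two nonincreasing maps $\bF_X$ and $\bF_Y^{-1}$. By definition $X\leq_{\IFRA}Y$ means $\phi(x)/x$ is increasing ($\phi$ star-shaped) and $X\leq_{\IFR}Y$ means $\phi$ is convex. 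The engine of the translation is the monotonicity of $\bF_Y$: for every real $\lambda$, the inequality $\phi(x)\geq\lambda$ is equivalent to $\bF_X(x)\le\bF_Y(\lambda)$ (apply the nonincreasing $\bF_Y$ and use $\bF_Y(\phi(x))=\bF_X(x)$), so that $\phi(x)-\lambda$ and $\bF_Y(\lambda)-\bF_X(x)$ always have the same sign.

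For part (1), I would use the elementary fact that $\phi(x)/x$ is increasing if and only if, for every $c>0$, the map $x\mapsto\phi(x)-cx$ changes sign at most once and, if it does, in the order ``$-,+$''; the forward direction is immediate from writing $\phi(x)-cx=x\,(\phi(x)/x-c)$, and the converse follows by a contradiction argument, choosing $c$ strictly between $\phi(x_1)/x_1$ and $\phi(x_2)/x_2$ at a hypothetical pair $x_1<x_2$ violating monotonicity, which forces a forbidden ``$+,-$'' crossing. Taking $\lambda=cx$ in the sign identity above shows $\phi(x)-cx$ has the same sign as $\bF_Y(cx)-\bF_X(x)$; the scaling substitution $t=cx$ with $a=1/c$ is a monotone reparametrization of $(0,\infty)$, hence preserves the sign-change pattern, and turns this into $\bF_Y(t)-\bF_X(at)$. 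Letting $c$ range over $(0,\infty)$ and invoking Remark~\ref{AOalpha} to restrict to $a>0$ yields the stated characterization.

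For part (2), I would invoke the analogous characterization of convexity: $\phi$ is convex on $[0,\infty)$ if and only if, for every line $x\mapsto cx+d$, the difference $\phi(x)-cx-d$ changes sign at most twice and, when it changes twice, in the order ``$+,-,+$'' (the only two-crossing pattern a convex function admits). Taking $\lambda=cx+d$ in the sign identity gives that $\phi(x)-cx-d$ has the same sign as $\bF_Y(cx+d)-\bF_X(x)$, and the affine substitution $s=cx+d$ with $a=1/c,\ b=-d/c$ rewrites this as $\bF_Y(s)-\bF_X(as+b)$. Again I would restrict to $c>0$ (equivalently $a>0$, via Remark~\ref{AOalpha}), which is legitimate because $\phi$ is nondecreasing and every line of nonpositive slope already produces a monotone, hence at-most-one-crossing, difference that carries no convexity information.

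The main obstacle is the careful bookkeeping in the converse of part (2), where the affine change of variables does not map $[0,\infty)$ onto itself. Two points need attention. First, on the region where $as+b<0$ one has $\bF_X(as+b)=1$, so $\bF_Y(s)-\bF_X(as+b)\le0$; I would check that this only prepends a block of sign ``$-$'' consistent with the value at the endpoint $\phi(0)-d=-d$, and hence does not increase the number of sign changes. Second, to deduce convexity from the sign condition I would argue contrapositively: a failure of convexity yields points $x_1<x_2$ and $x_0\in(x_1,x_2)$ with $\phi(x_0)$ above the chord $\ell$, and a slightly raised chord $\ell+\varepsilon$ then makes $\phi-\ell-\varepsilon$ exhibit the forbidden pattern ``$-,+,-$'' on $[x_1,x_2]$; the key point is that the left endpoint maps to $s=\phi(x_1)+\varepsilon>0$, so the whole violating interval lies in the admissible range $s>0$, and the corresponding pair $(a,b)$ with $a>0$ witnesses a violation of the tail condition. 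Assembling these pieces gives both implications.
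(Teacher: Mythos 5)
The paper does not actually prove this theorem: it is quoted (as Propositions~3.1 and 4.1 of Nanda et al.~\cite{ASOK}) and used as a tool, so there is no in-paper argument to compare yours against. Judged on its own merits, your proof is correct and follows the standard route behind the cited result: reduce star-shapedness and convexity of $\phi=\bF_Y^{-1}\circ\bF_X$ to line-crossing patterns, then transport those patterns through the sign identity $\mathrm{sign}\bigl(\phi(x)-\lambda\bigr)=\mathrm{sign}\bigl(\bF_Y(\lambda)-\bF_X(x)\bigr)$ and a monotone (affine) change of variables. The two delicate points you flag are the right ones, and your treatment of them works: the region $as+b<0$ contributes only a leading block of ``$-$'' signs that merges with the value $\phi(0)-d=-d\le 0$ at the junction, and in the converse the raised chord keeps the violating interval inside $s>0$ because its left endpoint is $\phi(x_1)+\varepsilon>0$. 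Three smaller points deserve attention. First, in the converse of part~(2) you need the violating chord to have strictly positive slope so that $a=1/c>0$ makes sense; this does follow from the monotonicity of $\phi$ (a zero-slope chord above which $\phi(x_0)$ lies would force $\phi(x_0)>\phi(x_2)$ with $x_0<x_2$), but your stated justification about nonpositive slopes addresses the forward direction, not this one, so the fact should be spelled out. Second, the appeal to Remark~\ref{AOalpha} is unnecessary and slightly circular, since that remark is packaged with the statement being proved: for $a\le 0$ the case is genuinely trivial, because in part~(1) the difference is $\bF_Y(x)-1\le 0$ and in part~(2) the difference $\bF_Y(x)-\bF_X(ax+b)$ is nonincreasing in $x$, hence has at most one sign change; writing these two lines makes the proof self-contained. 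Third, like the paper and its source, you implicitly assume the tails are continuous and strictly decreasing on their supports so that $\bF_Y^{-1}$ is a genuine inverse and the sign identity holds including equality cases; this matches the paper's level of rigor but is worth making explicit.
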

The characterization given by Theorem~\ref{convexity-equivalence} requires explicit expressions of the tails of the distributions, which are often not available. Computationally tractable characterizations to decide about the actual comparison of general distributions were studied in Arab and Oliveira~\cite{AO18} and Arab, Hadjikyriakou and Oliveira~\cite{AMO18} (see, Theorems~2.3 and 2.4 in the later reference). As one may verify in the proofs given in \cite{AO18} when proving the convex transform order, the control of the sign variation is usually more complex when considering $b<0$. However, a prior verification of the star transform ordering may help circumventing this difficulty, as expressed by the result quoted below, which is a reduced version, adapted to the present framework of Theorem~29 in Arab, Hadjikyriakou and Oliveira~\cite{AMO18}.
\begin{theorem}
\label{thm:newcrit}
Let $X$ and $Y$ be random variables with distribution functions $F_X,F_Y\in\mathcal{F}$, respectively. If $X\leq_{\IFRA}Y$ and the criterium in \textit{2.} from Theorem~\ref{convexity-equivalence} is verified for $b\geq0$, then $X\leq_{\IFR}Y$.
\end{theorem}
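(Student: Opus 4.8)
The plan is to reduce everything to the sign-variation characterization of the convex transform order in Theorem~\ref{convexity-equivalence}\textit{(2)}. By that characterization together with Remark~\ref{AOalpha}, proving $X\leq_{\IFR}Y$ amounts to showing that for every $a>0$ and every $b\in\mathbb{R}$ the function $g(x)=\bF_Y(x)-\bF_X(ax+b)$ changes sign at most twice and, if exactly twice, in the order ``$+,-,+$''. For $b\ge0$ this is the hypothesis verbatim, so the entire task is to deduce the same conclusion for $b<0$ from the star order $X\leq_{\IFRA}Y$ alone. As the text preceding the statement stresses, $b<0$ is exactly the awkward regime, and the star order is what should tame it.

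Fix $a>0$, $b<0$ and put $t_0=-b/a>0$. For $0\le x<t_0$ the argument $ax+b$ is negative, whence $\bF_X(ax+b)=1$ and $g(x)=\bF_Y(x)-1\le0$, with $g(0)=0$; so $g$ is nonpositive on $[0,t_0]$ and the only genuine region is $(t_0,\infty)$, where $ax+b>0$. There I would pass to the transform $\phi=\bF_Y^{-1}\circ\bF_X$ (well defined under the standing regularity of $F_Y$): since $\bF_Y^{-1}$ is decreasing, $\mathrm{sign}\,g(x)=\mathrm{sign}(\phi(ax+b)-x)$. Setting $u=ax+b>0$, so that $x=(u-b)/a$, this sign equals that of
\[
\psi(u)=\phi(u)-\frac{u-b}{a}=u\Bigl(\frac{\phi(u)}{u}-\frac1a\Bigr)+\frac{b}{a}.
\]

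Now the star order enters through Definition~\ref{DEF S-IFR}\textit{(2)}: the ratio $s(u)=\phi(u)/u$ is increasing. Let $u^\ast=\inf\{u>0:s(u)\ge1/a\}$. For $u<u^\ast$ both summands in $\psi$ are negative (recall $b/a<0$), so $\psi<0$; for $u\ge u^\ast$ the factor $s(u)-1/a$ is nonnegative and increasing while $u$ is positive and increasing, so $u(s(u)-1/a)$ is increasing and hence $\psi$ is increasing. An increasing function has at most one zero, so $\psi$, and therefore $g$ on $(t_0,\infty)$, changes sign at most once, in the order ``$-,+$''. Splicing this to the nonpositive stretch $[0,t_0]$, the function $g$ has on all of $(0,\infty)$ at most one sign change, necessarily ``$-,+$''; this meets the requirement of Theorem~\ref{convexity-equivalence}\textit{(2)} vacuously, since no pair of changes ever occurs. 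Combined with the hypothesis for $b\ge0$, that criterion now holds for every $b\in\mathbb{R}$, which is exactly $X\leq_{\IFR}Y$.

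The crux of the argument is the monotonicity analysis on $(t_0,\infty)$: translating the sign of $g$ into that of $\psi$ and then reading off from star-shapedness of $\phi$ that $\psi$ is first negative and afterwards increasing. The points I would treat with care are the reduction to $a>0$ via Remark~\ref{AOalpha}, the matching at the seam $u=0^{+}$ (where $\psi(0^{+})=b/a<0$ is consistent with $g(t_0)\le0$), and the regularity of $F_Y$ guaranteeing that $\bF_Y^{-1}$, and hence $\phi$, is a bona fide function.
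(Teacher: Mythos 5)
Your argument is correct, but be aware that there is no internal proof to compare it against: the paper never proves Theorem~\ref{thm:newcrit}, it imports it as a reduced version of Theorem~29 in Arab, Hadjikyriakou and Oliveira~\cite{AMO18}. What you have written is therefore a self-contained proof of a quoted result, and it holds up. The split at $t_0=-b/a$ is the right first move ($\bF_X(ax+b)=1$ on $[0,t_0]$, so the difference is nonpositive there), and the core step --- rewriting, for $u=ax+b>0$, the sign of $g$ as the sign of $\psi(u)=u\bigl(s(u)-1/a\bigr)+b/a$ with $s(u)=\phi(u)/u$ increasing by the star order --- correctly converts star-shapedness into the structural statement that $\psi$ is negative below $u^\ast$ and increasing above it, which caps the sign changes of $g$ on $(t_0,\infty)$ at one, of type ``$-,+$''. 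Such a pattern is admissible under the criterium in \textit{2.} of Theorem~\ref{convexity-equivalence}, the hypothesis covers $b\geq0$, and Remark~\ref{AOalpha} disposes of $a\leq 0$, so the conclusion follows. Two details you should state explicitly rather than leave implicit: (i) the sign identity $\mathrm{sign}\,g(x)=\mathrm{sign}(\phi(ax+b)-x)$ requires $\bF_Y$ to be continuous and strictly decreasing on its support; this is the same regularity the paper already presumes by writing $\bF_Y^{-1}$ in Definition~\ref{DEF S-IFR}, but it is a genuine assumption your reduction uses; (ii) at $u=u^\ast$ itself $s(u^\ast)-1/a$ may still be negative (the set $\{u>0: s(u)\geq 1/a\}$ need not contain its infimum), so the monotonicity claim should be made on the open ray $(u^\ast,\infty)$ --- the count of sign changes is unaffected. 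Finally, the same step can be run more geometrically without ever inverting $\bF_Y$: if $\bF_Y(x_1)>\bF_X(ax_1+b)$ at some $x_1>t_0$, apply the ``$-,+$'' pattern of the star order (part \textit{1.} of Theorem~\ref{convexity-equivalence}) to the ray through the origin of slope $c=(ax_1+b)/x_1$, and note that $cx_2<ax_2+b$ for $x_2>x_1$ because $b<0$; this propagates the ``$+$'' forward and yields the same conclusion with weaker regularity, which may be closer in spirit to the sign-variation machinery the paper relies on throughout.
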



The lifetime of parallel systems is expressed as the maximum of the lifetimes of each component. When these components have exponentially distributed lifetimes, the distribution functions of the system's lifetime is expressed as a linear combination of exponential terms. Later, it will be important to be able to count and localize the roots of such expressions. The following result will play an important role on this aspect (see Tossavainen~\cite{Toss07}, or Theorem~1 in Shestopaloff~\cite{She11}).
\begin{theorem}
\label{thm:zeros}
Let $n\geq 0$, $p_0>p_1>\cdots>p_n>0$, and $\alpha_j\ne0$, $j=0,1,\ldots,n$, be real numbers. Then the function $f(t)=\sum_{j=0}^n\alpha_jp^t_j$ has no real zeros if $n=0$, and for $n\geq 1$ has at most as many real zeros as there are sign changes in the sequence of coefficients $\alpha_0,\alpha_1,\alpha_2,\ldots,\alpha_n$.
\end{theorem}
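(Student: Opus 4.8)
The plan is to argue by induction on the number of terms $n+1$, passing from $f$ to its derivative via Rolle's theorem, exactly in the spirit of the classical proof of Descartes' rule of signs for exponential sums. The base case $n=0$ is immediate: $f(t)=\alpha_0 p_0^t$ never vanishes and the coefficient sequence exhibits no sign change, so both counts are $0$.

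For the inductive step I would first divide by the largest base, setting $g(t)=f(t)/p_0^t=\sum_{j=0}^n\alpha_j q_j^t$ with $q_j=p_j/p_0$. Since $p_0^t>0$, the functions $f$ and $g$ have exactly the same zeros, and now $q_0=1>q_1>\cdots>q_n>0$ with $q_j\in(0,1)$ for every $j\ge1$. This normalisation serves two purposes: the leading term becomes the constant $\alpha_0$, so that $g(t)\to\alpha_0$ as $t\to+\infty$; and differentiating annihilates this constant, leaving $g'(t)=\sum_{j=1}^n\beta_j q_j^t$ with $\beta_j=\alpha_j\ln q_j$, an exponential sum in only $n$ terms to which the induction hypothesis applies. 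Because $\ln q_j<0$ for $j\ge1$, each $\beta_j$ has the sign opposite to $\alpha_j$, and since flipping every sign preserves the number of consecutive sign changes, the sequence $(\beta_1,\dots,\beta_n)$ has the same number of sign changes as $(\alpha_1,\dots,\alpha_n)$. Write $V$ for the number of sign changes of the full sequence $(\alpha_0,\dots,\alpha_n)$ and $V'$ for that of $(\alpha_1,\dots,\alpha_n)$; then $V=V'$ when $\alpha_0,\alpha_1$ share a sign, and $V=V'+1$ otherwise. By Rolle's theorem a zero of $g'$ lies strictly between consecutive zeros of $g$, so, counting with multiplicity, $g$ has at most one more zero than $g'$; combined with the induction hypothesis applied to $g'$ this gives at most $V'+1$ zeros.

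When $\alpha_0$ and $\alpha_1$ have opposite signs this already yields the bound $V'+1=V$, and the step is complete. The delicate case, and the main obstacle, is when $\alpha_0,\alpha_1$ share a sign, where $V=V'$ and the naive count leaves one spurious zero to be eliminated. Here I would argue by contradiction: if $g$ had $V'+1$ zeros, Rolle would force $g'$ to have exactly $V'$ zeros, one inside each gap between consecutive zeros of $g$ and none to the right of the largest zero $t^\ast$ of $g$. Consequently $g'$ keeps a constant sign on $(t^\ast,+\infty)$, so $g$ is monotone there; since $g(t^\ast)=0$ and $g(t)\to\alpha_0\ne0$, that sign must be $\mathrm{sgn}(\alpha_0)$. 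On the other hand, as $t\to+\infty$ the dominant term of $g'$ is $\beta_1 q_1^t$ (because $q_1$ is the largest of $q_1,\dots,q_n$), forcing $\mathrm{sgn}(g')=\mathrm{sgn}(\beta_1)=-\mathrm{sgn}(\alpha_1)$ near $+\infty$. These two determinations of the sign of $g'$ on $(t^\ast,+\infty)$ contradict the assumption $\mathrm{sgn}(\alpha_0)=\mathrm{sgn}(\alpha_1)$, so in fact $g$ has at most $V'=V$ zeros.

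Finally, the finiteness of the zero set, which is needed both for Rolle and for the boundary limit arguments, is itself delivered by the induction, since the bound obtained at each stage is finite; one should also note at the outset that $g'$ is not identically zero, as the exponentials $q_1^t,\dots,q_n^t$ are linearly independent and the $\beta_j$ are nonzero. Assembling the two cases closes the induction and yields the stated bound.
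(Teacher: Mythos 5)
Your proof is correct, but there is nothing in the paper to compare it against: the paper does not prove Theorem~\ref{thm:zeros} at all, it quotes the result from the literature (Tossavainen~\cite{Toss07}, and Theorem~1 in Shestopaloff~\cite{She11}). What you have written is essentially the classical Descartes-rule argument for exponential sums, which is also the strategy of those cited sources: normalise by the dominant exponential so the leading term becomes a constant, differentiate to annihilate it, and induct on the number of terms via Rolle's theorem. The two genuinely delicate points are both handled correctly. First, in the case where $\alpha_0$ and $\alpha_1$ share a sign, you eliminate the spurious extra zero by playing the sign of $g'$ forced by monotonicity on $(t^\ast,+\infty)$, namely $\mathrm{sgn}(\alpha_0)$, against its asymptotic sign $\mathrm{sgn}(\beta_1)=-\mathrm{sgn}(\alpha_1)$; this is the crux of the proof and is exactly where a naive Rolle count stalls. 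Second, the finiteness needed to speak of ``the largest zero'' is indeed supplied by the induction: if $g$ had more than $V'+1$ zeros, Rolle would already contradict the inductive bound on $g'$, so only the case of exactly $V'+1$ zeros requires the limit argument. One could quibble that this last two-step reduction deserves to be stated explicitly rather than left implicit, and that the multiplicity-counting version of Rolle should be invoked by name if zeros are to be counted with multiplicity, but these are matters of presentation, not gaps.
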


\section{Main results}
We begin this section by quoting, for the sake of completeness, Theorem~3.1 by Kochar and Xu~\cite{KX09}. It was this result that suggested the conjecture we will be discussing in the sequel.

\begin{theorem}
\label{thm:KX09}
Let $X_1,\ldots,X_n$ be independent random variables with exponential distribution with common hazard rate $\lambda$. Let $Y_1,\ldots,Y_n$ be independent random variables with exponential distribution with hazard rates $\theta_i$, $i=1,\ldots,n$. Then $\max(X_1,\ldots,X_n)\leq_\IFR\max(Y_1,\ldots,Y_n)$.
\end{theorem}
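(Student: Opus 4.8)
The plan is to verify the convex transform ordering through the sign-variation characterizations of Theorem~\ref{convexity-equivalence}. Since both the convex and star transform orders are invariant under scale changes within the equivalence classes of $\mathcal{F}$, I would first normalize the common rate to $\lambda=1$, so that $\bF_X(x)=1-(1-e^{-x})^n$ and $\bF_Y(x)=1-\prod_{i=1}^n(1-e^{-\theta_i x})$; the claim is thus that $\max(X_1,\dots,X_n)\leq_{\IFR}\max(Y_1,\dots,Y_n)$ holds for \emph{arbitrary} positive rates $\theta_1,\dots,\theta_n$. Rather than attacking the convex order directly (which, via Theorem~\ref{convexity-equivalence}(2), would force the delicate case $b<0$), I would first establish the star order $X\leq_{\IFRA}Y$ and then invoke Theorem~\ref{thm:newcrit}, reducing the convex transform verification to the sign condition for $a>0$ and $b\geq0$ only.

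For the star order, by Theorem~\ref{convexity-equivalence}(1) and Remark~\ref{AOalpha} it suffices to show, for every $a>0$, that $g_a(x):=\bF_Y(x)-\bF_X(ax)=(1-e^{-ax})^n-\prod_{i=1}^n(1-e^{-\theta_i x})$ changes sign at most once, in the order ``$-,+$''. The two endpoints are transparent: as $x\to0^+$ one has $g_a(x)\sim(a^n-\prod_i\theta_i)\,x^n$, while as $x\to+\infty$ one has $g_a(x)\sim e^{-\theta_{(1)}x}-n\,e^{-ax}$ with $\theta_{(1)}=\min_i\theta_i$, so both end signs are determined by the position of $a$ relative to the $\theta_i$. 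To count the crossings I would not expand $g_a$ itself as an exponential sum---its coefficient sequence carries too many sign changes for Theorem~\ref{thm:zeros} to be conclusive---but instead work with the ratio $\rho_a(x)=(1-e^{-ax})^n/\prod_i(1-e^{-\theta_i x})$, whose logarithmic derivative $(\log\rho_a)'(x)=\frac{na}{e^{ax}-1}-\sum_{i=1}^n\frac{\theta_i}{e^{\theta_i x}-1}$ has, after clearing denominators, an exponential-polynomial numerator to which Theorem~\ref{thm:zeros} applies. Bounding the critical points of $\rho_a$ in this way, together with the computed end signs and the value $\rho_a(+\infty)=1$, pins down the single ``$-,+$'' crossing of $g_a$.

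For the convex order I then treat $a>0$ and $b\geq0$, writing $c=e^{-b}\in(0,1]$. By Theorem~\ref{convexity-equivalence}(2) I must show that $h(x):=\bF_Y(x)-\bF_X(ax+b)=(1-c\,e^{-ax})^n-\prod_{i=1}^n(1-e^{-\theta_i x})$ changes sign at most twice, in the order ``$+,-,+$'' when it changes twice. Here $h(0)=(1-c)^n\geq0$ and, as $x\to+\infty$, $h(x)\sim e^{-\theta_{(1)}x}-nc\,e^{-ax}$, consistent with a leading and a trailing ``$+$''. Exactly as for the star order, I would bound the number of crossings by counting, via Theorem~\ref{thm:zeros}, the zeros of the exponential polynomial arising from the logarithmic derivative of the corresponding tail ratio, and then reconstruct the ``$+,-,+$'' pattern from the boundary values.

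The main obstacle is the zero-counting itself. The exponents $\{ka:1\le k\le n\}$ coming from the homogeneous system interleave with the subset-sums $\{\sum_{i\in S}\theta_i\}$ coming from the heterogeneous one in a manner that depends on the $\theta_i$, so a naive application of Theorem~\ref{thm:zeros} to the tail difference overcounts and yields neither the bound of one crossing (star) nor of two crossings (convex). The technical heart of the argument is therefore to isolate the right auxiliary function---the logarithmic derivative of the tail ratio, which is structurally a difference of failure-rate-type terms---on which Theorem~\ref{thm:zeros} gives a tight count of critical points, and then to upgrade that count into the exact ``$-,+$'' and ``$+,-,+$'' sign patterns using the unambiguous behaviour of $g_a$ and $h$ at $0^+$ and $+\infty$.
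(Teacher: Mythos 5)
A point of fact first: the paper does not prove this statement at all. Theorem~\ref{thm:KX09} is quoted ``for the sake of completeness'' from Kochar and Xu~\cite{KX09} (their Theorem~3.1), and the paper's own results (Theorems~\ref{thm:ifra} and~\ref{thm:conj}) concern the heterogeneous-versus-heterogeneous comparison, which is a different statement. So there is no proof in the paper to compare yours against, and your proposal must stand on its own. Its architecture is reasonable and in fact mirrors the methodology the paper applies to its new results: normalize the scale (legitimate, by the equivalence-class result of Nanda et al.\ quoted in the Preliminaries), establish the star order via the sign-variation criterion of Theorem~\ref{convexity-equivalence} and Remark~\ref{AOalpha}, then invoke Theorem~\ref{thm:newcrit} so that the convex order only needs checking for $b\geq 0$. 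Your endpoint asymptotics for $g_a$ and $h$ are also correct.

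The genuine gap is that the technical heart of the argument, the zero count, is never carried out, and the device you propose for it does not evidently work. You rightly observe that applying Theorem~\ref{thm:zeros} directly to $g_a$ or $h$ is inconclusive, because the exponents $ka$, $1\leq k\leq n$, interleave arbitrarily with the subset sums $\sum_{i\in S}\theta_i$. But the logarithmic-derivative surrogate inherits exactly the same defect: clearing denominators in $(\log\rho_a)'$ produces
$$
N(x)=na\prod_{i=1}^n\bigl(e^{\theta_i x}-1\bigr)-\sum_{i=1}^n\theta_i\bigl(e^{ax}-1\bigr)\prod_{j\neq i}\bigl(e^{\theta_j x}-1\bigr),
$$
an exponential polynomial whose exponents are again the subset sums of the $\theta_i$ together with $a$ plus such subset sums, whose coefficient signs alternate with the parity of the subsets, and whose sign-change count in the ordered coefficient sequence depends on where $a$ falls among those sums; for general $n$ nothing bounds it by one or two. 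Moreover, even granting a bound of $k$ on the critical points of $\rho_a$, Rolle's theorem only limits the zeros of $\rho_a-1$ to $k+1$, so you would need the critical-point count to be essentially one, uniformly in $a$ and the $\theta_i$ --- precisely the claim left unproven. What you have is a plausible reduction plus an acknowledged obstacle, i.e., a strategy rather than a proof; to complete it you would either have to exhibit the sign pattern of $N$ explicitly (as the paper does, case by case, for $n=2$ in Theorems~\ref{thm:ifra} and~\ref{thm:conj}) or abandon sign variation and work with the convexity of $\overline{F}_Y^{-1}(\overline{F}_X(x))$ directly, as in the original source~\cite{KX09}.
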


\begin{remark}
This order relation has been extended, for the case $n=2$, by Theorem~7.2 in Arab, Hadjikyriakou and Oliveira~\cite{AMO18}, considering the iterated order relations mentioned above, which extend the convex transform order.
\end{remark}

For the remainder of this section, we will be interested in characterizing the order relationship between parallel systems of heterogeneous components with exponential lifetime distributions. We recall an order relation between vectors of hazard rates of each component introduced in Definition A.1 in Marshall and Olkin~\cite{MO79}.
\begin{definition}
\label{def:ord}
Let $(\lambda_1,\ldots,\lambda_n),\,(\theta_1,\ldots,\theta_n)\in \mathbb{R}^{n}$ two vectors such that $\lambda_1\leq\cdots\leq\lambda_n$ and $\theta_1\leq\cdots\leq\theta_n$. 
We say that $(\lambda_1,\ldots,\lambda_n)\prec (\theta_1,\ldots,\theta_n)$ if
$$
\sum_{i=1}^{k}\lambda_i \geq \sum_{i=1}^{k}\theta_i\quad\mbox{for}\quad k=1,\ldots,n-1,\qquad\mbox{and}\qquad \sum_{i=1}^{n}\lambda_i = \sum_{i=1}^{n}\theta_i.
$$
\end{definition}
Kochar and Xu~\cite{KX09} conjectured that the conclusion of Theorem~\ref{thm:KX09} would still hold if the $X_i$ have hazard rate $\lambda_i$ and the $Y_i$ have hazard $\theta_i$ such that $(\lambda_1,\ldots,\lambda_n)\prec(\theta_1,\ldots,\theta_n)$. We will be proving below that this conjecture only holds for systems with two components if we replace the order relation ``$\leq_\IFR$'' by ``$\leq_\IFRA$''.

\begin{theorem}
\label{thm:ifra}
Let $X_1$ and $X_2$ be independent random variables with exponential distributions with hazard rates $\lambda_1$ and $\lambda_2$, respectively. Analogously, let $Y_1$ and $Y_2$ be independent random variables with exponential distributions with hazard rates $\theta_1$ and $\theta_2$, respectively. If $(\lambda_1,\lambda_2)\prec(\theta_1,\theta_2)$, then $X=\max(X_1,X_2)\leq_{\IFRA}Y=\max(Y_1,Y_2)$.
\end{theorem}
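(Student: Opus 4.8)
The plan is to verify the criterion in part~1 of Theorem~\ref{convexity-equivalence}, using Remark~\ref{AOalpha} to restrict to $a>0$. Writing $S=\lambda_1+\lambda_2=\theta_1+\theta_2$, the tails of the two parallel systems are $\overline{F}_X(x)=e^{-\lambda_1 x}+e^{-\lambda_2 x}-e^{-Sx}$ and $\overline{F}_Y(x)=e^{-\theta_1 x}+e^{-\theta_2 x}-e^{-Sx}$. First I would record that $(\lambda_1,\lambda_2)\prec(\theta_1,\theta_2)$ from Definition~\ref{def:ord} means exactly $\theta_1\le\lambda_1\le\lambda_2\le\theta_2$ with the common sum $S$, so that $(\theta_1,\theta_2)$ is a spread-out (majorizing) version of $(\lambda_1,\lambda_2)$. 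The goal then becomes: for every $a>0$ the function $g_a(x):=\overline{F}_Y(x)-\overline{F}_X(ax)$ changes sign at most once on $(0,\infty)$, and, if it does, in the order $-,+$. Note that $g_a(0)=0$ always.

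The clean half is $a\ge 1$. Differentiating in the scale parameter gives $\partial_a g_a(x)=x\,f_X(ax)\ge 0$, so $g_a(x)$ is nondecreasing in $a$ for each fixed $x$. At $a=1$ the two $e^{-Sx}$ terms cancel and $g_1(x)=\bigl(e^{-\theta_1 x}+e^{-\theta_2 x}\bigr)-\bigl(e^{-\lambda_1 x}+e^{-\lambda_2 x}\bigr)$; since $t\mapsto e^{-tx}$ is convex for fixed $x>0$ and $(\theta_1,\theta_2)$ majorizes $(\lambda_1,\lambda_2)$, Karamata's inequality yields $g_1\ge 0$, with equality only at $x=0$. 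Hence $g_a\ge g_1\ge 0$ for all $a\ge 1$, so $g_a$ never changes sign there and the criterion holds trivially.

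For $0<a<1$ I would first pin down the two boundary signs. A second-order Taylor expansion at the origin gives $g_a(x)=(\lambda_1\lambda_2 a^2-\theta_1\theta_2)x^2+o(x^2)$, so the sign of $g_a$ near $0^+$ is that of $\lambda_1\lambda_2 a^2-\theta_1\theta_2$, with threshold $a^\star:=\sqrt{\theta_1\theta_2/(\lambda_1\lambda_2)}\le 1$. As $x\to\infty$ the term of smallest exponent dominates, so the sign of $g_a$ at infinity is that of $a\lambda_1-\theta_1$, with threshold $a_1:=\theta_1/\lambda_1\le a^\star$. Next I would apply Theorem~\ref{thm:zeros}: after ordering the (at most six) exponents $\theta_1,\theta_2,S,a\lambda_1,a\lambda_2,aS$ increasingly, the sign sequence of the coefficients has few changes, which bounds the number of real zeros of $g_a$; removing the guaranteed root $x=0$ leaves at most two positive zeros. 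In the sub-range $a_1<a<a^\star$ the two boundary signs are opposite ($-$ at $0^+$, $+$ at $\infty$), so an odd number of sign changes combined with the bound of two forces exactly one, necessarily $-,+$, as required.

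The remaining, and genuinely delicate, sub-ranges are those in which the two boundary signs coincide, namely $0<a<a_1$ (both $-$) and $a^\star<a<1$ (both $+$). Here the parity argument only yields an even number of sign changes, i.e.\ zero or two, and a value of two would produce the forbidden patterns $-,+,-$ or $+,-,+$. Ruling this out is the main obstacle, since the crude coefficient-sign bound of Theorem~\ref{thm:zeros} is too weak to exclude two positive zeros. I would resolve it by sharpening the count: either apply Rolle's theorem to reduce to the zeros of $g_a'(x)=a f_X(ax)-f_Y(x)$, or, more structurally, pass to the logarithmic derivative $\psi(x):=\bigl(\log[\overline{F}_Y(x)/\overline{F}_X(ax)]\bigr)'=a\,r_X(ax)-r_Y(x)$ and prove that $\psi$ changes sign at most once and only from $-$ to $+$. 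Because a parallel system of independent exponential components is IFR, with $r_X$ increasing from $0$ to $\lambda_1$ and $r_Y$ increasing from $0$ to $\theta_1$, this single-crossing property is the heart of the matter; once established, it shows that $\log[\overline{F}_Y/\overline{F}_X(a\cdot)]$ is first nonincreasing then nondecreasing, and since it vanishes at $x=0$ it can cross zero only upward, which gives $g_a$ the admissible $-,+$ pattern (and in particular $g_a\ge 0$ throughout for $a^\star<a<1$). Assembling the three sub-ranges with the $a\ge 1$ case verifies the criterion of Theorem~\ref{convexity-equivalence} for all $a>0$, yielding $X\leq_{\IFRA}Y$.
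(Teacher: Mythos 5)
Your setup (reduction to part~1 of Theorem~\ref{convexity-equivalence} with $a>0$), your treatment of $a\ge 1$ (monotonicity of $g_a$ in $a$ plus Karamata at $a=1$), and your parity argument on the sub-range $\theta_1/\lambda_1<a<a^\star$ are all correct. But the proof is not complete: for the two sub-ranges where the boundary signs coincide you only name two candidate strategies (Rolle's theorem, or a single-crossing property of $\psi(x)=a\,r_X(ax)-r_Y(x)$) without carrying either out, and the justification you offer for the second one is false. A parallel system of two independent \emph{heterogeneous} exponential components is not IFR: with $\lambda_1=1$, $\lambda_2=10$ the failure rate satisfies $r_X(0.2)\approx 1.13>\lambda_1$ while $r_X(x)\to\lambda_1=1$ as $x\to\infty$, so $r_X$ rises above $\lambda_1$ and then decreases back; it is not ``increasing from $0$ to $\lambda_1$''. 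Thus the single-crossing lemma you call the heart of the matter is left unproved and the reason you give for believing it collapses; this is a genuine gap. (The Rolle alternative is also not obviously helpful: $g_a'$ is again a combination of the same six exponentials, with coefficient signs simply negated, so Theorem~\ref{thm:zeros} gives no better bound for it.)

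Two comparisons with the paper's proof are worth recording. First, your claim that Theorem~\ref{thm:zeros} is too weak on $(a^\star,1)$ is mistaken: whenever $a>\theta_1/\lambda_1$ the ordered coefficients of $V(x)=\bF_Y(x)-\bF_X(ax)$ have sign pattern ``$+,-,-,+,+,-$'', i.e.\ three sign changes, hence at most three real zeros counted with multiplicity; since $x=0$ is a double zero ($V(0)=V'(0)=0$), at most one positive zero remains, which already excludes the pattern ``$+,-,+$'' on all of $(\theta_1/\lambda_1,1)$ --- parity is not needed there. Second, the genuinely delicate range is $0<a\le\theta_1/\lambda_1$, and the paper handles it not by sharpening the zero count for $V$ but by a bounding trick: since $\bF_X$ is decreasing, $V(x)\le H(x)=\bF_Y(x)-\bF_X\bigl(\tfrac{\theta_1}{\lambda_1}x\bigr)$, and at the endpoint $a_0=\theta_1/\lambda_1$ the terms $e^{-\theta_1 x}$ cancel, leaving only four exponentials with sign pattern ``$-,+,+,-$''; Theorem~\ref{thm:zeros} (at most two real zeros), together with $H(0)=H'(0)=0$ and $H''(0)=2\theta_1(\tfrac{\theta_1}{\lambda_1}\lambda_2-\theta_2)<0$, then forces $H\le 0$, hence $V\le 0$ and no sign change at all. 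Your own observation that $V$ is increasing in $a$ would let you transfer this endpoint estimate to all $a\le a_0$ in exactly the same way; the cancellation at $a=a_0$, which cuts the number of exponential terms from six to four, is the idea your proposal is missing.
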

\begin{proof}
Let $\bF_X$ and $\bF_Y$ be the survival functions of $X$ and $Y$, respectively. Then we have
$$
\bF_X(x)=e^{-\lambda_1x}+e^{-\lambda_2x}-e^{-(\lambda_1+\lambda_2)x}\qquad\mbox{and}\qquad
\bF_Y(x)= e^{-\theta_1x}+e^{-\theta_2x}-e^{-(\theta_1+\theta_2)x}.
$$
Taking into account Theorem~\ref{convexity-equivalence} and Remark~\ref{AOalpha}, it is sufficient to prove that $V(x)=\bF_Y(x)-\bF_X(ax)$ changes sign at most once, and if the sign change occurs, it is in the order ``$-,+$'', when $x$ traverses the interval $[0,+\infty)$, for every real number $a>0$. We will consider three separate cases, depending on the value of $a$. First, note that the assumption $(\lambda_1,\lambda_2)\prec(\theta_1,\theta_2)$ implies that $\theta_1\leq\lambda_1\leq\lambda_2\leq\theta_2$ and $\lambda_1+\lambda_2=\theta_1+\theta_2$.
\begin{description}
\item[{\rm\textit{Case 1}. $a=1$:}]
The function $V$ is rewritten now as $V(x)=e^{-\theta_1x}+e^{-\theta_2x}-(e^{-\lambda_1x}+e^{-\lambda_2x})$. Reordering the exponential terms so that they are appear in decreasing order of their basis, the sign pattern of the coefficientsis ``$+,-,-,+$''. Hence, according to Theorem~\ref{thm:zeros}, $V$ has at most two real roots. Moreover, $\lim_{x\rightarrow-\infty}V(x)=+\infty$, while $\lim_{x\rightarrow+\infty}V(x)=0^+$. Furthemore, taking into account that $V(0)=0$, $V^{\prime}(0)=0$ and $V^{\prime\prime}(0)=-\lambda_1^2+\theta_1^2-\lambda_2^2+\theta_2^2=-(\theta_2-\lambda_2)(\lambda_1-\lambda_2+\theta_1-\theta_2)>0$, it follows that $V(x)\geq0$, which means that $\bF_Y(x)\geq\bF_X(x)$, for every $x\in\mathbb{R}$, thus no sign changes occur.

\item[{\rm\textit{Case 2}. $a>1$:}] As $\bF_X$ is decreasing, it follows that, for $x\geq0$, $V(x)\geq \bF_Y(x)-\bF_X(x)\geq0$ so, again, no sign changes occur.

\item[{\rm\textit{Case 3}. $0<a<1$:}] To analyse the sign pattern of the coefficients in $V$, we distinguish two cases:
    \begin{description}
      \item[{\rm $\theta_1<a\lambda_1$:}]
      After reordering the exponentials in $V$ according the their basis, the sign pattern of the coefficients is ``$+,-,-,+,+,-$''. Thus, according to Theorem~\ref{thm:zeros}, $V$ has at most three real roots. The sign pattern of the coefficients implies that $\lim_{x\rightarrow-\infty}V(x)=-\infty$, while $\lim_{x\rightarrow+\infty}V(x)=0^+$. Finally, taking into account that $V(0)=V^{\prime}(0)=0$, the possible sign changes for $V$ when $x$ traverses from 0 to $+\infty$ are either ``$+$'' or ``$-,+$''.
      \item[{\rm $\theta_1\geq a\lambda_1$:}]
      As $\bF_X$ is decreasing and $a<\frac{\theta_1}{\lambda_1}$, it follows that, for $x\geq0$,
      $$
      V(x)\leq H(x)=\bF_Y(x)-\bF_X(\tfrac{\theta_1}{\lambda_1}x)
       =e^{-\theta_2 x}-e^{-(\theta_1+\theta_2)x}
          -\left(e^{-\frac{\theta_1\lambda_2}{\lambda_1}x}-e^{-\frac{\theta_1}{\lambda_1}(\lambda_1+\lambda_2)x}\right).
      $$
      After reordering the exponentials in $H$ according to their basis, the sign pattern of the coefficients in $H$ is ``$-,+,+,-$'', implying that, according to Theorem~\ref{thm:zeros}, $H$ has at most two real roots. The sign of the coefficients of $H$ also implies that 
      $\lim_{x\rightarrow+\infty}H(x)=0^-$ which, together with the fact that $H(0)=H^\prime(0)=0$ and $H^{\prime\prime}(0)=2\theta_1(\frac{\theta_1}{\lambda_1}\lambda_2-\theta_2)<0$, further imply that $H(x)\leq0$, so consequently $V(x)\leq0$, that is, no sign changes occur.
    \end{description}
  \end{description}
So, finally, $V$ has at most one sign change when $x$ goes from 0 to $+\infty$ and, if the change occurs, it is in the order ``$-,+$''. Thus, according to Theorem~\ref{convexity-equivalence}, $X\leq_\IFRA Y$.
\end{proof}

\begin{theorem}
\label{thm:conj}
Let $X$ and $Y$ be as in Theorem~\ref{thm:ifra}. Then $X$ and $Y$ are not comparable with respect to convex transform order.
\end{theorem}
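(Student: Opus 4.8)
The plan is to read ``not comparable'' as the conjunction $X\not\leq_{\IFR}Y$ and $Y\not\leq_{\IFR}X$, and to prove the two halves by quite different means. The second inequality can be excluded softly, as a consequence of Theorem~\ref{thm:ifra} together with the order-theoretic structure of the star transform order, whereas the first inequality is the real content and is where I would spend the effort, using the sign-variation characterization of Theorem~\ref{convexity-equivalence}.2. Throughout I would work with the analytic expressions for $\bF_X$ and $\bF_Y$ extended to all real arguments, exactly as in the proof of Theorem~\ref{thm:ifra} (which already evaluates limits as $x\to-\infty$), so that Theorem~\ref{thm:zeros} can be applied to count real roots.

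For $Y\not\leq_{\IFR}X$ I would argue by contradiction. Since a convex function vanishing at the origin is star-shaped, $Y\leq_{\IFR}X$ would imply $Y\leq_{\IFRA}X$. But Theorem~\ref{thm:ifra} already gives $X\leq_{\IFRA}Y$, and $\leq_{\IFRA}$ is a partial order on the equivalence classes of $\mathcal{F}$ under $F\sim G\iff F(x)=G(kx)$. Antisymmetry would then force $\bF_X(x)=\bF_Y(kx)$ for some $k>0$. Matching the exponents of the two exponential sums --- the uniquely negatively signed term gives $\lambda_1+\lambda_2=k(\theta_1+\theta_2)$ and the two positive terms give $\{\lambda_1,\lambda_2\}=\{k\theta_1,k\theta_2\}$ --- together with $\lambda_1+\lambda_2=\theta_1+\theta_2$ forces $k=1$ and $(\lambda_1,\lambda_2)=(\theta_1,\theta_2)$, contradicting the strictness of $\prec$. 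Hence $Y\not\leq_{\IFR}X$ for every genuinely heterogeneous choice of parameters.

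For the harder half $X\not\leq_{\IFR}Y$ I would exploit that, since $X\leq_{\IFRA}Y$, Theorem~\ref{thm:newcrit} guarantees the criterion of Theorem~\ref{convexity-equivalence}.2 cannot fail for $b\geq0$; any violation must therefore be produced with $b<0$, and that is where I would look. Writing $V(x)=\bF_Y(x)-\bF_X(ax+b)$ as a linear combination of at most six exponentials $\sum_j c_j e^{-\mu_j x}$, one checks that for $b<0$ the analytic form gives $V(0)=1-\bF_X(b)>0$, so $V$ starts out positive. Choosing $a<\theta_1/\lambda_1$ makes $e^{-a\lambda_1 x}$ the slowest-decaying term, carried with the opposing sign, so that $V(x)\to0^-$ as $x\to+\infty$. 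With first sign $+$ and last sign $-$ the number of sign changes is odd, and the only forbidden possibility compatible with a small total is three changes, i.e. the pattern ``$+,-,+,-$''. The strategy is then to pin this down: Theorem~\ref{thm:zeros} bounds the total number of real zeros of $V$ from above through the sign changes of its ordered coefficient sequence, while direct evaluations exhibiting a point $x_1$ with $V(x_1)<0$ followed by a point $x_2>x_1$ with $V(x_2)>0$ supply the matching lower bound, so that exactly three sign changes occur on $(0,\infty)$ and they are in the forbidden order.

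The main obstacle is making these two bounds close simultaneously: one must locate a concrete $(\lambda_1,\lambda_2)\prec(\theta_1,\theta_2)$ together with admissible $a<\theta_1/\lambda_1$ and $b<0$ for which the coefficient sign sequence, after reordering the six exponents, still permits only three real roots, yet $V$ is provably negative at $x_1$ and positive at $x_2$. Because --- as the introduction anticipates --- the violating parameters lie in a narrow region, I expect this delicate balancing to require committing to an explicit instance and checking the sign of $V$ at carefully chosen points, possibly after bounding $V$ below by a simpler auxiliary exponential sum in the spirit of the estimate $H(x)$ used in Case~3 of the proof of Theorem~\ref{thm:ifra}, and combining the values and derivatives of $V$ at $0$ with its behaviour at $\pm\infty$ to rule out spurious negative roots.
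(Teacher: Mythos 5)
Your soft argument for the half $Y\not\leq_{\IFR}X$ (convexity plus $\psi(0)=0$ gives star-shapedness, then antisymmetry of $\leq_{\IFRA}$ on the scale-equivalence classes forces $\{\lambda_1,\lambda_2\}=\{k\theta_1,k\theta_2\}$ and $k=1$) is correct for strictly majorized parameters and is a nice complement, since the paper only disproves $X\leq_{\IFR}Y$. But the main half rests on a misreading of Theorem~\ref{thm:newcrit} that sends you to the wrong region of parameters. That theorem says: \emph{if} $X\leq_{\IFRA}Y$ \emph{and} the sign criterion holds for all $b\geq 0$, \emph{then} $X\leq_{\IFR}Y$. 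It does not say that $X\leq_{\IFRA}Y$ implies the criterion for $b\geq 0$. On the contrary, its contrapositive, combined with Theorem~\ref{thm:ifra}, shows that if $X\not\leq_{\IFR}Y$ then a violation \emph{must} occur at some $b\geq 0$; the paper's construction confirms this, producing the pattern ``$+,-,+,-$'' with $b=b_0>0$ (explicitly $a=0.749$, $b=0.0125$) inside the window $\frac{\theta_1}{\lambda_2}<a<\frac{\theta_1}{\lambda_1}$. So the region $b<0$ where you propose to work is exactly the one you should avoid.

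Working at $b<0$ also breaks your sign analysis. Since $F_X\in\mathcal{F}$, the true survival function satisfies $\bF_X(t)=1$ for $t<0$, so for $b<0$ one has $V(0)=0$ and $V(x)=\bF_Y(x)-1<0$ on $(0,-b/a)$; your value $V(0)=1-\bF_X(b)>0$ comes from the analytic continuation of the exponential sum, which does not represent $\bF_X$ at negative arguments, and consequently the root counts from Theorem~\ref{thm:zeros} do not control the sign of the true $V$ near the origin. Finally, even in the correct region your sketch defers the essential step --- exhibiting $x_1<x_2$ with $V(x_1)<0<V(x_2)$ --- to an unspecified numerical search, and this is precisely the hard point (the paper stresses how narrow the violating region is). The paper resolves it structurally rather than numerically: at $a_0=\theta_1/\lambda_1$ the strict inequality $\bF_Y(x)<\bF_X(a_0x)$ for $x>0$ (Case 3 of Theorem~\ref{thm:ifra}) permits a choice of $b_0>0$ with $\bF_Y(x_0)<\bF_X(a_0x_0+b_0)$, giving the pattern ``$+,-,+$'' at $a=a_0$; since $\frac{\partial V}{\partial a}=xf_X(ax+b_0)>0$, $V$ is increasing in $a$, and continuity of the zero set between $a=\theta_1/\lambda_2$ (pattern ``$+,-$'') and $a=a_0$ yields an intermediate $a$ realizing ``$+,-,+,-$''. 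Some such monotonicity-and-continuity argument, or a fully verified explicit instance with $b\geq 0$, is what your proposal is missing.
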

\begin{proof}
We will start the discussion by showing the control of the sign variation of $V(x)=\bF_Y(x)-\bF_X(ax+b)$ when possible, exhibiting the choice of parameters $a>0$ and $b>0$ where the control is not possible and describing a way to find parameters where, indeed, the sign variation violates the criterium given in \textit{2.} of Theorem~\ref{convexity-equivalence}. Note that, unlike when studying the star transform order case, we have that $V(0)=1-\bF_X(b)>0$.
\begin{description}
\item[{\rm\textit{The favorable cases}.}]
Taking into account Theorems~\ref{convexity-equivalence} and \ref{thm:newcrit}, and Remark~\ref{AOalpha} we can achieve the appropriate sign control in the cases described below, depending only on the value of $a>0$ and  $b>0$. 
The control on the sign variation is, in each case, obtained by the identification of the possible number of real roots with Theorem~\ref{thm:zeros}, and coupling this with the behaviour of $V$ when $x\longrightarrow\pm\infty$.
    \begin{description}
    \item[{\rm\textit{Case 1}. $a\geq 1$:}]
    As both parameters are nonnegative and we are interested in $x\geq0$, we have $ax+b\geq x$, hence $V(x)\geq\bF_Y(x)-\bF_X(x)\geq 0$.
    \item[{\rm\textit{Case 2}. $\frac{\theta_1}{\lambda_1}\leq a<1$:}]
    After reordering the appropriately the terms in $V$ the sign pattern of its coefficients is ``$+,-,-,+,+,-$'' (or ``$+,-,+,+,-$'' if $a=\frac{\theta_1}{\lambda_1}$), hence, according to Theorem~\ref{thm:zeros}, $V$ has at most three real roots. Moreover, from the signs of the coefficients it follows that 
    $\lim_{x\rightarrow+\infty}V(x)=0^+$ (in both cases). As $V(0)>0$, this means there are at most two nonnegative real roots.
    Thus, when $x$ traverses from 0 to $+\infty$, the sign pattern can only be either ``$+$'' or ``$+,-,+$''.
    \item[{\rm\textit{Case 3}. $0<a\leq\frac{\theta_1}{\lambda_2}<1$:}]
    Reordering again the terms in $V$ to apply Theorem~\ref{thm:zeros}, we find the sign pattern for its coefficients ``$-,-,+,+,+,-$'' (collapsing to `$-,+,+,+,-$'' if $a=\frac{\theta_1}{\lambda_2}$), so $V$ has at most two real roots. At infinity, we find that
    $\lim_{x\rightarrow+\infty}V(x)=0^-$. Hence, as $V (0)>0$, $V$ has one nonnegative real root and the sign pattern of $V$ when $x$ goes from 0 to $+\infty$ is ``$+,-$''.
    \end{description}

\item[{\rm\textit{The violating case}. $\frac{\theta_1}{\lambda_2}<a<\frac{\theta_1}{\lambda_1}$:}]
Applying Theorem~\ref{thm:zeros} to the present case, we find the following sign pattern for the coefficients of $V$: ``$-,+,-,+,+,-$''. Thus, we derive that there are at most four real roots. As, also from the sign of the coefficients of $V$, it follows that $\lim_{x\rightarrow-\infty}V(x)=-\infty$, which together with the fact and $V(0)>0$, implies that one of the roots is negative. Again, from the signs of the coefficients of $V$, it follows that $\lim_{x\rightarrow+\infty}V(x)=0^-$, and this is compatible with the sign variations, when $x$ traverses from 0 to $+\infty$, ``$+,-$'' or ``$+,-,+,-$''. That is, the usage of Theorem~\ref{thm:zeros} is not conclusive...

We need a different approach to show that the sign variation ``$+,-,+,-$'' is indeed achieved for an appropriate choice of the parameters $a>0$ and $b>0$, hence violating the comparison criterium. From the previous analysis, we know that if $a=\frac{\theta_1}{\lambda_2}$ the sign variation of $V$ as $x$ traverses from 0 to $+\infty$ is ``$+,-$''. Likewise, we also know that if $a=\frac{\theta_1}{\lambda_1}$ the sign variation of $V$ as $x$ goes from 0 to $+\infty$ is either ``$+$'' or ``$+,-,+$''. The actual verification of each the later possible sign variations may be achieved by a suitable choice of $b>0$. Let us choose $b_0>0$ such that the sign variation of $V(x)=\bF_Y(x)-\bF_X(\frac{\theta_1}{\lambda_1}x+b_0)$ when $x$ goes from 0 to $+\infty$ is ``$+,-,+$'', and keep this choice fixed for the sequel of the proof. Furthermore, remember that when $\frac{\theta_1}{\lambda_2}<a<\frac{\theta_1}{\lambda_1}$ we have verified that $\lim_{x\rightarrow+\infty}V(x)=0^-$. Hence we have the following graphical description of the sign of $V$, depending on $x$ and $a$:
\begin{center}
\begin{tikzpicture}
\draw[->] (-.5,0) -- (9,0) node [below] {\small $x$};
\draw[->] (0,-.5) -- (0,4.0) node [left] {\small $a$};
\draw (-.1,1) -- (.1,1) node [left] {\small $\frac{\theta_1}{\lambda_2}\;$} node [right] {$+\ +\quad\cdots\quad +\ +\ -\ -\ -\ -\quad\cdots\quad -\ -\ -\ -\ -$};
\draw (-.1,3) -- (.1,3) node [left] {\small $\frac{\theta_1}{\lambda_1}\;$} node [right] {$+\ +\quad\cdots\quad +\ +\ +\ +\ -\ -\quad\cdots\quad -\ -\ +\ +\ +$};
\draw(8.8,1.5) node {$-$};
\draw(8.8,2.0) node {$-$};
\draw(8.8,2.5) node {$-$};
\end{tikzpicture}
\end{center}
So far, to analyse the sign variation of $V$, we have been looking at the function as depending on $x$ alone. The previous argument suggests viewing $V$ as a function of $x$, $a$ and $b$, although we will keep $b=b_0$ fixed. Thus, we may differentiate with respect to $a$, to find
$$
\frac{\partial V}{\partial a}(x,a,b)  =
    x\left(\lambda_1 e^{-\lambda_1(ax+b)}+\lambda_2 e^{-\lambda_2(ax+b)}-(\lambda_1+\lambda_2) e^{-(\lambda_1+\lambda_2)(ax+b)}\right)
  =  xf_X(ax+b),
$$
where $f_X$ is the density function of $X$ which implies that for every possible choice for $b>0$, in particular for $b=b_0$, $\frac{\partial V}{\partial a}(x,a,b)>0$. Hence, as a function of $a$ alone, $V$ is increasing. Thus, when $a$ increases from $\frac{\theta_1}{\lambda_2}$ to $\frac{\theta_1}{\lambda_1}$ the value for $V$ is also increasing, implying that once it becomes positive it may no longer get back to negative values. For the particular choice $b=b_0$, that produces the lines of signs ``$+$'' and ``$-$'' above, the increasingness of $V$ with respect to $a$ explains why the initial sequence of ``$+$'' signs for $a_0=\frac{\theta_1}{\lambda_1}$ is longer than the corresponding initial sequence when $\frac{\theta_1}{\lambda_2}$. It remains to verify that such choice of $b_0$ does exist. First, note that we proved in Case 3. of the proof of Theorem~\ref{thm:ifra} that $\bF_Y(x)-\bF_X(a_0x)\leq 0$, for every $x\geq 0$, and the inequality is strict for every $x>0$. Now, choosing some $x_0>0$, we have that $\bF_X^{-1}(\bF_Y(x_0))>a_0x_0$, so we may find $b_0$ (depending on $x_0$) such that $\bF_X^{-1}(\bF_Y(x_0))>a_0x_0+b_0$, which implies that $\bF_Y(x_0)<\bF_X(a_0x_0+b_0)$. As the functions are continuous, this inequality will hold on some neighbourhood of $x_0$, so the sign pattern represented above always happens.

Getting back to the graphical representation above, we now locate the line of points such that $V(x,a,b_0)=0$. As the functions are continuous, this line is also continuous, and we will find the behaviour described by the thick line below, where we also identify the sign of $V$ in each region:
\begin{center}
\begin{tikzpicture}
\draw[->] (-.5,0) -- (9,0) node [below] {\small $x$};
\draw[->] (0,-.5) -- (0,4.0) node [left] {\small $a$};
\draw (-.1,1) -- (.1,1) node [left] {\small $\frac{\theta_1}{\lambda_2}\;$} node [right] {$+\ +\quad\cdots\quad +\ +\ -\ -\ -\ -\quad\cdots\quad -\ -\ -\ -\ -$};
\draw (-.1,3) -- (.1,3) node [left] {\small $\frac{\theta_1}{\lambda_1}\;$} node [right] {$+\ +\quad\cdots\quad +\ +\ +\ +\ -\ -\quad\cdots\quad -\ -\ +\ +\ +$};
\draw (8.8,1.5) node {$-$};
\draw (8.8,2.0) node {$-$};
\draw (8.8,2.5) node {$-$};
\draw[thick] (3.4,1) to [out=80,in=225] (3.9,2) to [out=45,in=220] (4.4,3);
\draw[loosely dashed, thick] (4.4,3) to [out=40,in=150] (7.6,3);
\draw[thick] (7.6,3) to [out=330,in=160] (7.9,2.4) to [out=0,in=180] (9,2.95);
\draw (1.7,2) node {$+$};
\draw (8.0,2.6) node {$+$};
\draw (6,2) node {$-$};
\draw[dashed] (0,2.8) -- (9,2.8);
\end{tikzpicture}
\end{center}
\end{description}
The position of the horizontal dashed line identifies a value for the parameter $a$ for which the sign variation of $V$, with $b=b_0$, is actually ``$+,-,+,-$'', so the random variables are not comparable with respect to the convex transform order.
\end{proof}
\begin{remark}
The construction above really depends on an appropriate choice for $b$, producing the ``$+,-,+$'' sign pattern on the top line. Numerical experiments indicate that this may be achieved by choosing $b$ close to 0. This behaviour for the sign patterm is critical for the argument used above. Moreover, numerical experiments suggest that, even with a convenient choice for $b$ the top-right region with the ``$+$'' signs may be relatively narrow. Without prior indication of where to look for, it is easy to miss the appropriate choice for $a$ and $b$.

The original motivation for the conjecture stems from the characterization of skewness of the densities, in the sense introduced by van Zwet~\cite{VZ-70}, which was the problem studied by Kochar and Xu~\cite{KX09}. The link between the convexity approach used by Kochar and Xu~\cite{KX09} and our sign variation approach follows from the characterization of convexity by means of sign variation, described in Theorem~20 in Arab and Oliveira~\cite{AO18}. The narrow region for the choice of $a$ and $b$ violating the sign variation pattern, means that the function $\bF_Y^{-1}(\bF_X(x))$ is convex in almost the whole of its domain and concave in a small interval. 
\end{remark}
An explicit example and choice of the parameters violating the $\leq_\IFR$-comparability may be obtained taking $\lambda_1=2$, $\lambda_2=3$, $\theta_1=1.5$, $\theta_2=3.5$, $a=0.749$ and $b=0.0125$. One will still need to use a lot of zooming to actually see the sign changing behaviour as $x$ grows.
\begin{remark}[An open problem]
Finally, we note that the conjecture by Kochar and Xu~\cite{KX09} refers to parallel systems with $n\geq 2$ components each. Our Theorem~\ref{thm:conj} resolves the conjecture for $n=2$. For $n>2$, the comparison between $X=\max(X_1,\ldots,X_n)$, where the $X_i$ are independent exponentially distributed with hazard rates $\lambda_i$, and $Y=\max(Y_1,\ldots,Y_n)$, where the $Y_i$ are also independent and exponentially distributed with hazard rates $\theta_i$, remains open, even with respect to the star transform order. Again, numerical experiments suggest that $X\leq_\IFRA Y$ may hold whenever $(\lambda_1,\ldots,\lambda_n)\prec(\theta_1,\ldots,\theta_n)$, while these random variables seem not to be comparable with respect to the convex transform order.
\end{remark}

\end{document}